\DeclareMathAlphabet{\mathcalligra}{T1}{calligra}{m}{n}
\theoremstyle{plain}
\newtheorem{theorem}{Theorem}
\newtheorem{corollary}[theorem]{Corollary}
\newtheorem{lemma}[theorem]{Lemma}
\newtheorem{proposition}[theorem]{Proposition}
\newtheorem*{theorem*}{Theorem}
\theoremstyle{definition}
\theoremstyle{remark}
\newtheorem*{remark}{Remark}
\newtheorem*{remarks}{Remarks}
\newcommand{\Z}{\mathbb{Z}}
\newcommand{\N}{\mathbb{N}}
\newcommand{\GL}{\operatorname{GL}}
\renewcommand{\pmod}[1]{\  \,  \left(  \operatorname{mod} \,  #1 \right)}
\DeclareMathAlphabet{\mathpzc}{OT1}{pzc}{m}{it}
\numberwithin{equation}{section}
\numberwithin{theorem}{section}
\numberwithin{table}{section}
\newcommand{\floor}[1]{\left\lfloor #1 \right\rfloor}
\newcommand{\ceil}[1]{\left\lceil #1 \right\rceil}
\begin{document}
\author{Soumyarup Banerjee}
\address{Department of Mathematics, University of Hong Kong, Pokfulam, Hong Kong}
\email{soumya.tatan@gmail.com}
\author{Manav Batavia}
\address{Department of Mathematics, Indian Institute of Technology Bombay, Powai, Mumbai, Maharashtra 400076, India}
\email{manavbatavia@gmail.com}
\author{Ben Kane}
\address{Department of Mathematics, University of Hong Kong, Pokfulam, Hong Kong}
\email{bkane@hku.hk}
\author{Muratzhan Kyranbay}
\address{Department of Mathematics, Hong Kong Baptist University, Kowloon Tong, Kowloon, Hong Kong}
\email{kmuratjan@gmail.com}
\author{Dayoon Park}
\address{Department of Mathematical Sciences, Seoul National University, Seoul 151-747, Republic of Korea}
\email{pdy1016@snu.ac.kr}
\author{Sagnik Saha}
\address{Department of Mathematics, Indian Institute of Science Education and Research, Thiruvananthapuram, Vithura, Kerala 695551, India}
\email{sagniksaha16@iisertvm.ac.in}
\author{Hiu Chun So}
\address{Department of Mathematics, University of Hong Kong, Pokfulam, Hong Kong}
\email{u3538555@connect.hku.hk}
\author{Piyush Varyani}
\address{ Department of Mathematics, Indian Institute of Technology, Roorkee, Roorkee, Uttarakhand 247667, India }
\email{piyushviitr@gmail.com}
\title{Fermat's polygonal number theorem for repeated generalized polygonal numbers}
\thanks{The research presented here was conducted while the second, fourth, sixth, seventh, and eighth authors were undergraduate researcher assistants at the University of Hong Kong and they thank the university for its hospitality. The internship of the second author was additionally supported by the Hong Kong Indian Chamber of Commerce, who he thanks for their generous support. The research of the third author was supported by grants from the Research Grants Council of the Hong Kong SAR, China (project numbers HKU 17316416, 17301317, and 17303618).}
\subjclass[2010]{11E12,11E25,11E08}
\date{\today}
\keywords{Fermat's polygonal number theorem, polygonal numbers, Diophantine equations, universal quadratic polynomials}
\begin{abstract}
In this paper, we consider sums of generalized polygonal numbers with repeats, generalizing Fermat's polygonal number theorem which was proven by Cauchy. In particular, we obtain the minimal number of generalized $m$-gonal numbers required to represent every positive integer and we furthermore generalize this result to obtain optimal bounds when many of the generalized $m$-gonal numbers are repeated $r$ times, where $r\in\N$ is fixed. 

\end{abstract}
\maketitle

\begin{section}{Introduction}
Fermat famously conjectured in 1638 that every positive integer may be written as the sum of at most $m$ $m$-gonal numbers; that is, for $P_m(x):=\frac{(m-2)x^2-(m-4)x}{2}$ (the \begin{it}$x$-th $m$-gonal number\end{it}, where $x\in\N_0$ with $\N_0:=\N\cup\{0\}$) there exists an $\bm{x} = (x_1, x_2, \dots, x_m)\in\N_0^m$ such that 
\[
\sum_{j=1}^m P_m(x_j) = n
\]
for every $n\in\N$; we call a Diophantine equation which represents every positive integer \begin{it}universal\end{it}. The $m=4$ case of Fermat's claim was Lagrange's celebrated four squares theorem, proven in 1770, Gauss famously proved the $m=3$ case, sometimes known as the Eureka Theorem, in 1796, and Cauchy finally resolved the general case in 1813 \cite{Cauchy}. Guy \cite{Guy} investigated the question of the optimality of Fermat's polygonal number theorem. That is to say, for which $\ell\in\N$ is the sum 
\begin{equation}\label{eqn:polygonalsum}
\sum_{j=1}^{\ell} P_m(x_j) = n
\end{equation}
universal? More generally, Guy \cite{Guy} considered sums of the type \eqref{eqn:polygonalsum} with more general inputs $x_j\in\Z$ ($P_m(x)$ with $x\in\Z$ is known as a \begin{it}generalized $m$-gonal number\end{it}) and used a simple argument based on the fact that the smallest generalized $m$-gonal number other than $0$ and $1$ is $m-3$ to show that $\ell\geq m-4$ for $m\geq 8$, while Cauchy's theorem implies that the minimal choice satisfies $\ell \leq m$. Comparison of Guy's and Cauchy's theorems hence leaves a small gap between the upper and lower bounds. In this paper, we ask where the true answer lies within this gap in the case of generalized $m$-gonal numbers. For $\bm{a}\in\N^{\ell}$ and $m\geq 3$, consider the sum ($\bm{x}\in\Z^{\ell}$)
\begin{equation}\label{eqn:Pmadef}
P_{m,\bm{a}}(\bm{x}):=\sum_{j=1}^{\ell} a_j P_m(x_j).
\end{equation}
One may think of this as a weighted sum of generalized polygonal numbers or as a sum of generalized polygonal numbers where the first generalized $m$-gonal number is repeated $a_1$ times, the second is repeated $a_2$ times, and so on. Using this second interpretation, we see by Guy's work \cite{Guy} that if $P_{m,\bm{a}}$ is universal, then $\sum_{j=1}^{\ell} a_j\geq m-4$; an upper bound for $\sum_{j=1}^{\ell}a_j$ is not clear, however. We consider the specific case when\[
\bm{a}_{r,\ell_1,\ell}:=(\bm{1},\bm{r})=(1,1,\dots, 1,r,r,\dots, r),
\]
 where $1$ is repeated $\ell_1$ times and $r$ is repeated $\ell_2:=\ell-\ell_1$ times. Let $\ell_m$ denote the minimal $\ell$ for which \eqref{eqn:polygonalsum} is universal when we more generally allow $\bm{x}\in\Z^{\ell}$ and similarly for $r\geq 2$ and $\ell_1\in\N$ denote the optimal minimal choice $\ell=\ell_{m,r,\ell_1}$ for which the sum of generalized $m$-gonal numbers $P_{m,\bm{a}_{r,\ell_1,\ell}}$ defined in \eqref{eqn:Pmadef} is universal. Our main result is the following.
\begin{theorem}\label{thm:lmr}
\noindent

\noindent
\begin{enumerate}[leftmargin=*,align=left,label={\rm(\arabic*)}]
\item 
For $m\notin\{7,9\}$ we have 
\[
\ell_m=\begin{cases} m-4& \text{if }m\geq 10,\\ 
3& \text{if }m\in \{3,5,6\},\\
4&\text{if }m\in\{4,8\}.
\end{cases}
\]

\item 
For $7\leq r< m-3$ we have
\[
\ell_{m,r,r-1}=\ceil{\frac{m-3}{r}}+(r-2).
\]
\item We have 
\begin{align*}
\ell_{m,2,1}&=\floor{\frac{m}{2}} \text{for }m\geq 14,\\
\ell_{m,3,2}&=\begin{cases}m-2 & \text{for }m\geq 10 \text{ with } m \not\equiv 2 \pmod{3},\\ \frac{2m-4}{3} & \text{ for }m\geq 14 \text{ with } m\equiv 2 \pmod{3}, \end{cases}\\
\ell_{m,4,3}&=\ceil{\frac{m-2}{4}}+2\text{ for }m\geq 62,\\
\ell_{m,5,4}&=\ceil{\frac{m-3}{5}}+3 \text{ for }m\geq 78,\\
\ell_{m,6,5}&=\ceil{\frac{m-3}{6}}+4 \text{ for }m\geq 93.
\end{align*}
\end{enumerate}
\end{theorem}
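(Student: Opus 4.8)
Each equality in Theorem~\ref{thm:lmr} splits into a lower bound --- the displayed sum with one fewer term is not universal --- and an upper bound --- the displayed sum is universal --- and the two halves require entirely different methods. For the lower bounds I would refine Guy's elementary argument, whose engine is that the generalized $m$-gonal numbers in increasing order begin $0,1,m-3,m,\dots$: a weight-$1$ slot can contribute only $0$ or $1$ to an integer $n<m-3$, and a weight-$r$ slot only $0$ or $r$ to an integer $n<r(m-3)$. In part~(1), with all weights $1$, the single integer $n=m-4$ already forces $\ell\ge m-4$. In the weighted parts one must choose the obstruction integer $n$ more delicately: one takes $n$ just below a threshold at which some larger generalized $m$-gonal number (such as $m-3$, or $r(m-3)$, or $r$ times $m$) first becomes usable, and exploits in addition that every weight-$r$ slot contributes a multiple of $r$, so the residue of $n$ modulo $r$ forces the weight-$1$ slots into a prescribed configuration; optimizing over the resulting competing representations of $n$ yields $\ell_2\ge\ceil{\frac{m-3}{r}}-1$ in part~(2), hence $\ell\ge\ceil{\frac{m-3}{r}}+(r-2)$, and the analogous bounds in part~(3). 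The sporadic cases $m\in\{3,4,5,6,8\}$ of part~(1) are treated by hand, using the classical identifications of the generalized hexagonal numbers with the triangular numbers and of the generalized squares with the squares, the theorems of Gauss and Lagrange (and the known analogues for $m=5,8$) for the upper bounds, and a short search for the lower bounds.

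\textbf{Upper bounds, reduction to quadratic forms.} For the upper bounds the plan is to complete the square in each variable by setting $y_j:=(2m-4)x_j-(m-4)$, turning $P_{m,\bm{a}}(\bm{x})=n$ into
\[
\sum_{j=1}^{\ell} a_j\,y_j^2 \;=\; 8(m-2)\,n+(m-4)^2\sum_{j=1}^{\ell}a_j,\qquad y_j\equiv -(m-4)\pmod{2(m-2)}.
\]
Thus $P_{m,\bm{a}}$ is universal exactly when the diagonal form $\sum_j a_j y_j^2$, restricted to the fixed residue class of the $y_j$ modulo $2(m-2)$, represents every integer $N$ in the arithmetic progression on the right-hand side. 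In every case except the sporadic small-$m$ cases of part~(1) at least four slots are available, so I would isolate a quaternary (or, when more variables are on hand as in part~(1), a higher-rank) sub-sum to carry the representation argument, and keep the remaining slots in reserve: each of them contributes one of $\{0,1\}$ or $\{0,r\}$, and so supplies a bounded shift of $n$ with which to move $N$ into a square class that is locally represented at the finitely many bad primes, namely $2$ and the primes dividing $m-2$.

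\textbf{Upper bounds, analytic input and the finite check; main obstacle.} The theta series of the coset of the sub-lattice singled out above is a modular form of weight $\ge 2$: its Eisenstein component contributes $\asymp N\prod_p\beta_p(N)$ to the number of representations of $N$, while the cuspidal component is $O\!\left(N^{1/2+\eps}\right)$ by Deligne's bound in the weight-$2$ (quaternary) case, with a more favourable exponent when more variables are available. Once the local densities $\beta_p(N)$ are shown to be positive --- the crux being uniform control at $2$ and at the primes dividing $m-2$, obtained via the reserved shift --- every $N$ beyond an explicit threshold $B=B(m,r)$ is represented; alternatively one may run this step through a Cauchy-type lemma, as in Cauchy's original proof, writing $n$ as a sum of four generalized $m$-gonal numbers plus a bounded number of $1$'s. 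It then remains to handle $n<B$: those up to the trivially representable range (about $\ell_1$ in part~(1), and $\ell_2 r+\ell_1$ in parts~(2) and~(3)) are immediate, and the finitely many remaining $n$ are checked directly. The hard part, I expect, is precisely to make $B(m,r)$ --- and the local analysis at the primes dividing $2(m-2)$ --- uniform enough in $m$ that this residual verification is feasible: in the weighted cases there are only about $(m-3)/r$ ``free'' weight-$r$ variables when $r$ is large, leaving very little room simultaneously to repair square-class obstructions and to guarantee enough representations, which is why parts~(2) and~(3) carry the hypotheses $r\ge 7$ and $m\ge 62,\,78,\,93,\ldots$.
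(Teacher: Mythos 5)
Your lower-bound plan is essentially the paper's (Guy's obstruction at $n=m-4$, plus congruence refinements exploiting that each weight-$r$ slot contributes a multiple of $r$), but the phrase ``the analogous bounds in part~(3)'' hides a real gap at $r=3$: the naive Guy-type bound there gives only $\ell\gtrsim\frac{m}{3}$, whereas the true answer is $\ell_{m,3,2}=m-2$ for $m\not\equiv 2\pmod{3}$. The actual obstruction is the coincidence $P_m(2)=m\equiv m-3=P_m(-1)\pmod{3}$, which (via the specific integers $3m-12$ and $2m-9$, according to $m\bmod 3$) pins the two weight-$1$ slots down to values in $\{0,1\}$ and then reduces, after dividing by $3$, to Guy's bound for the \emph{unweighted} problem, forcing $\ell-2\geq m-4$. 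Without identifying this congruence and these integers, your lower bound for $r=3$ is off by roughly a factor of $3$, and Corollary \ref{cor:gammabound} is out of reach.

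The more serious gap is in the upper bounds. Your completion of the square and the Eisenstein-plus-Deligne estimate are sound for each \emph{fixed} $m$, but the theorem is uniform in $m$: the level and discriminant of your quaternary sub-form grow with $m$, so the threshold $B(m,r)$ past which the main term dominates grows polynomially in $m$ (this is precisely the route of \cite{KaneLiu}, which yields only $\gamma_m\ll m^{7+\eps}$), and the residual ``finite check'' becomes an unbounded family of verifications as $m\to\infty$. You flag this as the hard part but do not resolve it, and the local densities at primes dividing $m-2$ are genuinely delicate. The paper sidesteps all of this with one elementary device you are missing: restricting $\sum_{j=1}^{5}P_m(x_j)$ to the hyperplane $x_1+\dots+x_5=0$ kills the linear part and yields $(m-2)\sum_{1\le i\le j\le 4}x_ix_j$, where $\sum_{1\le i\le j\le 4}x_ix_j$ is a single, $m$-\emph{independent}, class-number-one universal quaternary form. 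Hence five generalized $m$-gonal numbers represent all of $(m-2)\N_0$ uniformly in $m$ (Lemma \ref{lem:conguniversal}), and the entire upper bound reduces to writing $n=(m-2)k_1+rk_2+k_3$ and disposing of the bounded remainder by explicit, $m$-uniform combinatorial case analysis using the list \eqref{eqn:Pmvals}. You would need to replace your analytic step with something of this kind; as written, the argument does not close.
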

\newpage
\begin{remarks}
\noindent

\noindent
\begin{enumerate}[leftmargin=*,align=left,label={\rm(\arabic*)}]
\item
Using Guy's argument, for $m$ sufficiently large (depending on $r$), if $P_{m,\bm{a}_{r,\ell_1,\ell}}$ is universal, then one must have $\ell_1\geq r-1$ since otherwise the integers from $1$ to $r-1$ cannot all be represented by the form $P_{m,\bm{a}_{r,\ell_1,\ell}}$. Hence Theorem \ref{thm:lmr} (2) is optimal in the $\ell_1$ aspect. The restriction on $r$ is chosen so that we have at least $6$ variables which are not repeated. The cases $2\leq r\leq 6$ hence require more delicate care and lead to weaker results in terms of the dependence on $m$. Indeed, a more careful case-by-case checking shows that one may take $m\geq 27$ for $r=4$, $m\geq 34$ for $r=5$, and $m\geq 40$ for $r=6$, but we have chosen the weaker restrictions on $m$ appearing in Theorem \ref{thm:lmr} (3) in order to present the proof in a more systematic way. These improved lower bounds for $m$ form a theoretical limit on the extent to which the method in this paper may be applied; that is to say, reducing the bound on $m$ beyond the stated bounds $m\geq 14$, $m\geq 14$, $m\geq 27$, $m\geq 34$, and $m\geq 40$ for $r=2$, $r=3$, $r=4$, $r=5$, and $r=6$, respectively, would require a different method than the one presented in this paper (or at least a serious modification that likely depends on the choice of $m$) because we would not have enough variables to apply a crucial lemma that applies to the generic case. Motivated by this, the second, fourth, sixth, seventh, and eighth authors \cite{BKSSV} have relaxed the conditions to $\ell_1=r+4$ in order to guarantee at least $6$ such variables for $r\geq 2$, thereby extending the method in this paper to compute $\ell_{m,r,r+4}$ without any restriction on $r$ or $m$.  

\item
The second restriction in Theorem \ref{thm:lmr} (2) is somewhat artificial. Namely, if $r\geq m-3$, then we have $r-1\geq m-4$ generalized $m$-gonal numbers preceding the $r$-times repeated generalized $m$-gonal numbers, and the original $r-1$ terms are already universal by Theorem \ref{thm:lmr} (1).  
\item
The method used in this paper does not work for the cases $m\in\{7,9\}$ in Theorem \ref{thm:lmr} (1). A certain modification of Lemma \ref{lem:conguniversal} might work for $m=9$, but the $m=7$ case seems to require a different method because the dimension is too small to use a modification of Lemma \ref{lem:conguniversal}. Together with K.-L. Kong, the first and sixth authors are investigating the usage of modular forms techniques to resolve these remaining cases. 

\end{enumerate}
\end{remarks}

The case $r=3$ in Theorem \ref{thm:lmr} (3) is exceptional both because $\ell_{m,3,2}>\ell_m$ and because the dependence on $r$ for $r=3$ is vastly different than the generic dependence on large $r$ in Theorem \ref{thm:lmr} (2). The primary reason for this is the fact that 
\[
P_m(2)=m\equiv m-3=P_m(-1)\pmod{3}.
\]
Because of this, it turns out that either $3m-12$ or $2m-9$ is not represented by $P_{m,\bm{a}_{3,2,\ell}}$ for $\ell<m-2$. Guy exploited a similar property for $m-4$ in order to obtain the lower bound $\ell_m\geq m-4$. 

This special behaviour of the integers $3m-12$ or $2m-9$ brings up an interesting discussion about general forms $P_{m,\bm{a}}$ with arbitrary $\bm{a}\in\N^{\ell}$. Generalizing the diagonal case of the Conway--Schneeberger fifteen theorem, Liu and the third author \cite{KaneLiu} proved that there exists a unique minimal $\gamma_m\in\N$ such that $P_{m,\bm{a}}$ is universal if and only if it represents every $n\leq \gamma_m$. It was shown in \cite{KaneLiu} that $m-4\leq \gamma_m\ll m^{7+\varepsilon}$, and this was improved by the fifth author and Kim \cite{KimPark}, who showed that there exists an absolute constant $c\geq 1$ such that $m-4\leq \gamma_m\leq cm$. It is natural to wonder about the optimal choice of $c$ (perhaps only holding for $m$ sufficiently large). The case $r=3$ leads to the conclusion that $c\geq 3$ unless $m\equiv 2\pmod{3}$, in which case $c\geq 2$. 
\begin{corollary}\label{cor:gammabound}
If $m\geq 14$, then we have 
\[
\gamma_m\geq \begin{cases} 3m-12 &\text{if }m\not\equiv 2\pmod{3},\\ 2m-9&\text{if }m\equiv 2\pmod{3}. \end{cases}
\]
\end{corollary}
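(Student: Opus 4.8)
The plan is to derive Corollary \ref{cor:gammabound} as an immediate consequence of Theorem \ref{thm:lmr} (3) together with the characterization of $\gamma_m$ from \cite{KaneLiu} applied to a judiciously chosen weight vector $\bm{a}$. Recall that $\gamma_m$ is defined so that for \emph{every} $\bm{a}\in\N^{\ell}$, the form $P_{m,\bm{a}}$ is universal if and only if it represents every integer $n\leq\gamma_m$. Equivalently, if some specific $P_{m,\bm{a}}$ fails to represent a particular integer $n_0$ but \emph{does} represent every integer below $n_0$, then we must have $\gamma_m\geq n_0$. So the strategy is: for each congruence class of $m$, exhibit a weight vector $\bm{a}$ of the form $\bm{a}_{3,2,\ell}$ with $\ell$ as large as we like (at least one less than the threshold where universality kicks in) which represents $1,2,\dots,n_0-1$ but not $n_0$, where $n_0=3m-12$ or $2m-9$ as appropriate.

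The key input is the discussion immediately preceding the corollary, which asserts that either $3m-12$ or $2m-9$ is not represented by $P_{m,\bm{a}_{3,2,\ell}}$ whenever $\ell<m-2$. More precisely, the proof of Theorem \ref{thm:lmr} (3) for $r=3$ must establish that $\ell_{m,3,2}=m-2$ when $m\not\equiv 2\pmod 3$ (and $=\frac{2m-4}{3}$ when $m\equiv 2\pmod 3$), and the lower bound half of that statement is exactly the claim that for $\ell=\ell_{m,3,2}-1$ the form $P_{m,\bm{a}_{3,2,\ell}}$ is \emph{not} universal, with the obstruction being precisely the integer $3m-12$ (resp. $2m-9$). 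Thus I would: (i) fix $m\geq 14$; (ii) set $\ell:=\ell_{m,3,2}-1$ and consider $\bm{a}:=\bm{a}_{3,2,\ell}=(1,1,3,3,\dots,3)$; (iii) invoke the (to-be-proven, but assumable) fact from the $r=3$ analysis that this form fails to represent $n_0$, where $n_0=3m-12$ if $m\not\equiv 2\pmod 3$ and $n_0=2m-9$ if $m\equiv 2\pmod 3$; (iv) check that $P_{m,\bm{a}_{3,2,\ell}}$ nonetheless represents every positive integer strictly below $n_0$ — this either follows because $n_0$ is the \emph{smallest} integer not represented (which is the natural way the obstruction is produced in the parity/congruence argument sketched after the theorem, since $P_m(2)\equiv P_m(-1)\pmod 3$ is what forces the gap), or can be verified directly; (v) conclude from the defining property of $\gamma_m$ in \cite{KaneLiu} that $\gamma_m\geq n_0$.

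The main obstacle is step (iv): one must ensure that $n_0$ is genuinely the \emph{first} failure of $P_{m,\bm{a}_{3,2,\ell}}$, not merely \emph{a} failure, since $\gamma_m\geq n_0$ requires representability of all of $1,\dots,n_0-1$. If instead some smaller integer $n_1<n_0$ were also unrepresented, the argument would only give $\gamma_m\geq n_1$, a weaker bound. I expect this to be handled inside the proof of Theorem \ref{thm:lmr} (3): the congruence obstruction modulo $3$ coming from $P_m(2)=m\equiv m-3=P_m(-1)\pmod 3$ should be arranged so that the smallest unrepresented value is exactly $3m-12$ or $2m-9$; concretely, with $\ell_1=2$ unrepeated variables one can absorb residues flexibly for small $n$, and the bottleneck only appears once $n$ is large enough to force heavy use of the $r=3$ block. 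Once that is in place, the corollary is a one-line deduction, so the write-up is short: state the choice of $\bm{a}$, cite the relevant portion of the proof of Theorem \ref{thm:lmr} (3) for the non-representation of $n_0$ and the representation of all smaller integers, and apply \cite{KaneLiu}.
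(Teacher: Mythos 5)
Your proposal is correct and follows essentially the same route as the paper: the published proof is a one-liner observing that $P_{m,\bm{a}_{3,2,\ell}}$ with $\ell<\ell_{m,3,2}$ fails to represent $3m-12$ (resp.\ $2m-9$) while every smaller integer is represented by $P_{m,\bm{a}_{3,2,\ell_{m,3,2}-1}}$, exactly as you outline, and then applies the defining property of $\gamma_m$ from \cite{KaneLiu}. Your step (iv) correctly flags the only point needing care (that $n_0$ is the \emph{first} failure), which the paper dispatches with ``one can see that'' based on the representability analysis in the $r=3$ part of the proof of Theorem \ref{thm:lmr} (3).
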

\begin{remark}
Using techniques from the arithmetic theory of quadratic forms, the constant $\gamma_m$ has been explicitly computed for some small $m$. In particular, we have $\gamma_3=\gamma_6=8$ by Bosma and the third author \cite{BosmaKane}, $\gamma_4=15$ by the Conway--Schneeberger fifteen theorem \cite{Conway,Bhargava}, $\gamma_5=109$ by Ju \cite{Ju}, and $\gamma_8=60$ by Ju and Oh \cite{JuOh}. In light of the work in \cite{KimPark} and the lower bound in Corollary \ref{cor:gammabound}, it may be interesting to systematically investigate other choices of $\bm{a}$ in order to obtain an improvement on the lower bound for $c$. 
\end{remark}

The paper is organized as follows. In Section \ref{sec:prelim} we give some helpful preliminary information about quadratic forms and quadratic polynomials. In Section \ref{sec:proofs} we prove Theorem \ref{thm:lmr} (1) and Theorem \ref{thm:lmr} (2), giving the stronger version of Fermat's polygonal number theorem in the $r=1$ case and its generalization for large $r$. Finally, in Section \ref{sec:smallr}, we consider small choices of $r>1$, for which a different technique is necessary, and the resulting bound for $\gamma_m$ given in Corollary \ref{cor:gammabound}.

\end{section}
\begin{section}*{Acknowledgements}
The authors thank Min-Joo Jang and Sudhir Pujahari for helpful conversations and the anonymous referee for a careful reading of the paper. 
\end{section}
\begin{section}{Preliminaries}\label{sec:prelim}

The sums of polygonal numbers appearing in \eqref{eqn:Pmadef} are a special case of a natural class of functions known as quadratic polynomials. In order to define these, recall that a homogeneous polynomial $Q$ of degree $2$ is known as a \begin{it}quadratic form\end{it}. If $Q(\bm{x})\in\Z$ whenever $\bm{x}\in\Z^{\ell}$, then we call $Q$ \begin{it}integer-valued\end{it}, and it is moreover known as integral if the associated \begin{it}Gram matrix\end{it} (i.e., the matrix $A$ for which $Q(\bm{x})=\bm{x}^T A\bm{x}$) has integer coefficients ({\bf warning}: in different contexts, authors write $Q(\bm{x})=\frac{1}{2}\bm{x}^T A\bm{x}$, so one needs to be careful about a factor of $2$ whenever comparing in the literature). We call such a quadratic form \begin{it}positive-definite\end{it} if it only attains non-negative values and vanishes if and only if $\bm{x}=\bm{0}$. A \begin{it}totally-positive quadratic polynomial\end{it} is a function of the form 
\[
P(\bm{x})=Q(\bm{x})+\mathcal{L}(\bm{x})+c,
\]
where $Q$ is a positive-definite quadratic form, $\mathcal{L}$ is a linear function defined over $\Z$, and $c$ is a constant, such that $P(\bm{x})\geq 0$ for all $\bm{x}\in\Z^{\ell}$ and $P(\bm{x})=0$ if and only if $\bm{x}=\bm{0}$. We furthermore assume that $P$ attains integer values for $\bm{x}\in\Z^{\ell}$.

For a totally-positive quadratic polynomial $P$, we set
\[
r_P(n):=\#\{\bm{x}\in\Z^{\ell}: P(\bm{x})=n\}.
\]
Note that if $P=Q$ is a quadratic form with associated Gram matrix $A$, then for each matrix $B\in \GL_{\ell}(\Z)$ satisfying 
\[
B^TAB=A
\]
and each $\bm{x}$ such that $Q(\bm{x})=n$, we have 
\[
Q(B\bm{x})=\bm{x}^T B^T A B \bm{x}=\bm{x}^T A \bm{x}=Q(\bm{x})=n.
\]
We call $B$ an \begin{it}automorph\end{it} of $Q$ and set $\omega_Q$ to be the number of automorphs of $Q$. The matrix $B$ is a special case of an \begin{it}isometry\end{it} between two quadratic forms $Q$ and $Q'$; we say that $Q'$ is isometric to $Q$ over a ring $R$ if there exists $B\in \GL_{\ell}(R)$ such that $B^TAB=A'$, where $A$ and $A'$ are the Gram matrices of $Q$ and $Q'$, respectively. The set of isometry classes of a given discriminant is finite

The first check for representations of $n$ by a quadratic polynomial is to test local conditions. Namely, if $P(\bm{x})=n$ is not solvable with $\bm{x}\in \Z_p^{\ell}$ for some prime $p$ (or, equivalently, modulo $p^j$ for some $j$), then clearly $P(\bm{x})=n$ is not solvable with $\bm{x}\in\Z^{\ell}$. An integer is said to be \begin{it}locally represented\end{it} if it is represented over $\Z_p$ for all primes $p$. Minkowski began the study of the local-global principle; this asks for which locally-represented integers $n$ do global representations (representations over the integers in this setting) exist. Siegel defined a natural weighted average 
\[
r_{\operatorname{gen}(Q)}(n):=\frac{1}{\sum_{j=1}^{r} \frac{1}{\omega_{Q_j}}} \sum_{j=1}^r \frac{r_{Q_j}(n)}{\omega_{Q_j}},
\]
where the sum runs over all of the isometry classes of positive-definite quadratic forms $Q_j$ which are isometric to $Q$ over $\Z_p$ for all $p$ (the set of such forms is known as the \begin{it}genus\end{it} of $Q$ and $r$ is known as the \begin{it}class number of $Q$\end{it}). Siegel \cite{Siegel1,Siegel2} and Weil \cite{Weil} then computed so-called local densities (roughly speaking, these ``count'' the number of representations over $\Z_p$ and vanish precisely when no such representations exist) to give an explicit formula for $r_{\operatorname{gen}(Q)}(n)$. We need only the following well-known special form of their results.

\begin{theorem}[Siegel, Weil]\label{thm:SiegelWeil}
We have that $r_{\operatorname{gen}(Q)}(n)>0$ if and only if $n$ is locally represented. Moreover, if the class number of $Q$ is one, then $r_Q(n)>0$ if and only if $n$ is locally represented.
\end{theorem}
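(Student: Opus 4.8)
The stated result is the classical theorem of Siegel and Weil, so the plan is to recall the structure of its proof rather than to invent a new argument; in the write-up one would simply cite \cite{Siegel1,Siegel2,Weil} together with a standard reference for the non-vanishing bookkeeping. The heart of the matter is Siegel's main theorem on quadratic forms, which I would first establish in the shape of an exact Euler product for the genus average,
\[
r_{\operatorname{gen}(Q)}(n)=\alpha_\infty(n,Q)\prod_p\alpha_p(n,Q),
\]
where $\alpha_p(n,Q):=\lim_{k\to\infty}p^{-k(\ell-1)}\#\{\bm{x}\bmod p^k:\ Q(\bm{x})\equiv n\bmod p^k\}$ is the $p$-adic representation density and $\alpha_\infty(n,Q)$ is the corresponding archimedean density (the normalized surface measure of the real solution set). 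For $\ell$ not too small --- which is the only case we need, since our forms have many variables --- the cleanest route is Weil's adelic reformulation: write $r_{\operatorname{gen}(Q)}(n)$ as the $n$-th Fourier coefficient of the integral of the theta kernel over $\Orth(Q)(\Q)\backslash\Orth(Q)(\mathbb{A})$, identify that integral with a Siegel Eisenstein series via the Siegel--Weil formula, and read off that the Fourier coefficients of the Eisenstein series are exactly the product of local densities by a direct local computation. Positive-definiteness of $Q$ enters here to guarantee convergence of the theta integral and to control the archimedean factor.

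Granting the Euler product, the remaining steps are soft. First I would record the local non-vanishing dictionary: $\alpha_p(n,Q)>0$ if and only if $Q(\bm{x})=n$ is solvable with $\bm{x}\in\Z_p^{\ell}$. One direction is immediate, since a $\Z_p$-solution reduces to a solution modulo every power of $p$; the converse is a Hensel-lifting argument, using that a solution modulo a sufficiently high power of $p$ (past roughly twice the $p$-adic valuation of the discriminant of $Q$) refines to a genuine $\Z_p$-point. For all but finitely many $p$ one has $\alpha_p(n,Q)=1$, so the product converges and is positive exactly when every factor is positive; the archimedean factor $\alpha_\infty(n,Q)$ is positive because $Q$ is positive definite and $n\in\N$, so $\{\bm{x}\in\R^{\ell}:Q(\bm{x})=n\}$ is a nonempty smooth hypersurface. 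Combining these, $r_{\operatorname{gen}(Q)}(n)>0$ if and only if $n$ is locally represented at every prime, which is the first assertion.

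For the ``moreover'' clause, recall (as noted in the preliminaries) that the genus of $Q$ splits into finitely many isometry classes $Q_1,\dots,Q_r$, and the class number being one means $r=1$, so the defining weighted average degenerates to $r_{\operatorname{gen}(Q)}(n)=r_Q(n)$. Hence in that case $r_Q(n)>0$ if and only if $n$ is locally represented, directly from the first part.

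The main obstacle --- indeed the only real content --- is the first step: proving the exact identity between the analytic genus average and the product of $p$-adic densities, i.e., the Siegel mass/representation formula (equivalently, computing the Fourier expansion of the relevant Eisenstein series and matching it with Weil's local integrals). Once that identity and the standard local density computations are in hand, the rest is elementary manipulation of non-negative quantities. Since the theorem is quoted verbatim from the literature, I would not reproduce this computation but instead cite \cite{Siegel1,Siegel2,Weil}.
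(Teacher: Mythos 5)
The paper gives no proof of this theorem at all---it is quoted as a classical result with citations to Siegel and Weil---and your proposal does exactly the same thing, supplementing the citation with a correct sketch of the standard argument (Siegel--Weil product of local densities, Hensel lifting for local non-vanishing, positivity of the archimedean factor, and the collapse of the genus average when the class number is one). The only nitpick is that $\alpha_p(n,Q)$ is generally close to $1$ rather than exactly $1$ at the good primes (what matters is that the tail of the product converges to a positive value), but this does not affect your argument.
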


The following lemma plays a crucial role in the proof of Theorem \ref{thm:lmr}. 
\begin{lemma}\label{lem:conguniversal}
The sum $\sum_{j=1}^{5}P_m(x_j)$ represents every integer in the set $(m-2)\N_0$.
\end{lemma}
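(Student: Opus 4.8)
The plan is to reduce the lemma to the universality of a single positive-definite quaternary quadratic form and then invoke the local--global results recalled above.

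First I would use the identity $P_m(x)=(m-2)\binom{x}{2}+x$ (equivalently $2P_m(x)=(m-2)x(x-1)+2x$) to write, for $\bm{x}=(x_1,\dots,x_5)\in\Z^5$,
\[
\sum_{j=1}^{5}P_m(x_j)=(m-2)\sum_{j=1}^{5}\binom{x_j}{2}+\sum_{j=1}^{5}x_j.
\]
Given $n=(m-2)t$ with $t\in\N_0$, it therefore suffices to find $\bm{x}\in\Z^5$ with $\sum_{j=1}^{5}x_j=0$ and $\sum_{j=1}^{5}\binom{x_j}{2}=t$. Setting $x_5=-(x_1+x_2+x_3+x_4)$ and using $\sum_{j}x_j=0$ one gets $\sum_{j=1}^{5}\binom{x_j}{2}=\frac12\sum_{j=1}^{5}x_j^2=R(x_1,x_2,x_3,x_4)$, where
\[
R(x_1,x_2,x_3,x_4):=\frac12\left(\sum_{j=1}^{4}x_j^2+\left(\sum_{j=1}^{4}x_j\right)^2\right)=\sum_{j=1}^{4}x_j^2+\sum_{1\le i<j\le 4}x_ix_j
\]
is a positive-definite integer-valued quaternary form; indeed $2R$ is the norm form of the $A_4$ root lattice. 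Thus the lemma becomes the assertion that $R$ is universal, i.e.\ $r_R(t)>0$ for every $t\in\N_0$.

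To prove $R$ universal I would apply Theorem~\ref{thm:SiegelWeil}: since $R$ has class number one (the $A_4$ root lattice is unique in its genus), it is enough to check that $R$ is locally universal, i.e.\ represents every element of $\Z_p$ for each prime $p$. For an odd prime $p\neq 5$ this is automatic because $R$ is then unimodular over $\Z_p$ and a nondegenerate unimodular quaternary form over $\Z_p$ is isotropic, hence represents all of $\Z_p$; for $p=2$ and $p=5$ one verifies it directly from the local structure of $R$. Siegel--Weil then gives $r_R(t)=r_{\operatorname{gen}(R)}(t)>0$ for all $t\in\N_0$, which proves the lemma. Alternatively one can avoid the $2$-adic and $5$-adic bookkeeping by checking that $R$ represents the finitely many critical integers governing universality of integer-valued quaternary forms, or by quoting the known classification of universal quaternary forms; exhibiting the necessary small vectors of $R$ is a routine finite computation.

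The main obstacle is establishing the universality of $R$; the algebraic reduction is elementary, but some genuine input (class-number-one together with the small local computations, or the verification of the critical values) is unavoidable. It is worth recording why the cheap shortcuts fail: using two variables $x,-x$ together with three variables of vanishing sum reduces the problem to representing $t$ by $z^2+(u^2+uv+v^2)$, which already misses $t=6$, and any reduction to a ternary form is hopeless since no ternary form is universal; so all four free variables of $R$ are genuinely needed.
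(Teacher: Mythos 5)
Your proposal is correct and follows essentially the same route as the paper: restrict to the hyperplane $\sum_{j=1}^{5}x_j=0$, reduce to the quaternary form $\sum_{1\leq i\leq j\leq 4}x_ix_j$ (your $R$), and conclude universality from class number one plus local representability via Siegel--Weil, with the $290$-theorem as a fallback. The only differences are presentational (the $\binom{x}{2}$ bookkeeping and the $A_4$-lattice identification), not mathematical.
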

\begin{proof}
Consider $\bm{x}\in\Z^5$ in the hyperplane $\sum_{j=1}^{5}x_j = 0$. For $\bm{x}$ in this hyperplane, we have 
\begin{align*}
\sum_{j=1}^{5}P_m(x_j)&=\frac{m-2}{2}\sum_{j=1}^{5}x_j^2-\frac{m-4}{2}\sum_{j=1}^{5}x_j\\
&=\frac{m-2}{2}\left(\sum_{j=1}^{4}x_j^2 +\left(-x_1-x_2-x_3-x_4\right)^2 \right)\\
&=(m-2)\sum_{1\leq i\leq j\leq 4} x_ix_j.
\end{align*}
The quadratic form $\sum_{1\leq i\leq j\leq 4} x_ix_j$ has class number one and represents every integer locally, and is hence universal by Theorem \ref{thm:SiegelWeil} (alternatively, one may simply use the $290$-theorem of Bhargava and Hanke \cite{BhargavaHanke} and verify that every integer up to $290$ is represented by this quadratic form, and thus the form is universal).

\end{proof}
\end{section}

\begin{section}{The extension of Fermat's polygonal number theorem for \texorpdfstring{$r=1$}{r=1} and large \texorpdfstring{$r$}{r}}\label{sec:proofs}
In this section, we prove parts (1) and (2) of Theorem \ref{thm:lmr}, giving the generalization of Fermat's polygonal number theorem answering Guy's question and covering the generic case for $r\geq 7$.

\subsection{The case \texorpdfstring{$r=1$}{r=1}}
We next make use of Lemma \ref{lem:conguniversal} in order to prove Theorem \ref{thm:lmr} (1).
\begin{proof}[Proof of Theorem \ref{thm:lmr} (1)]
The case $m=3$ was proven by Gauss, the case $m=4$ was proven by Lagrange, and Guy \cite{Guy} uses Legendre's classification of the integers which are sums of three squares to resolve the $m=5$ case. Guy also points out that the set of generalized hexagonal numbers is precisely the set of triangular numbers and hence the $m=6$ case follows from the $m=3$ case. The $m=8$ case is proven by Sun in \cite[Theorem 1.1]{Sun}. 
\vspace{.1in}

Now assume that $m\geq 10$. Since we know that $\ell_{m}\geq m-4$ by Guy's work in \cite{Guy}, it suffices to prove that for every integer can be written as the sum of $m-4$ generalized $m$-gonal numbers. Let $n\in\N$ be given and write it as 
\[
n=(m-2)k_1+k_2
\]
with $0\leq k_2\leq m-3$. By Lemma \ref{lem:conguniversal}, every multiple of $m-2$ may be written as the sum of $5$ generalized $m$-gonal numbers. Hence if $k_2$ may be written as a sum of $m-9$ generalized $m$-gonal numbers, then we may choose $\bm{x}\in\Z^{m-4}$ for which $\sum_{j=1}^{5}P_m(x_j)=(m-2)k_1$ and $\sum_{j=6}^{m-4} P_{m}(x_j)=k_2$, yielding the claim. This is possible for $0\leq k_2\leq m-9$ and $k_2=m-3$ (because $P_m(-1)=m-3$). 

It remains to consider the cases $m-8\leq k_2\leq m-4$. We thus write $k_2=m-2-k$ with $2\leq k\leq 6$. For $k_1\geq k-1$ we may write 
\begin{align*}
n&=(m-2)k_1+k_2=(m-2)(k_1-k)+k(m-2)+(m-2-k)\\
&=(m-2)(k_1-k+1)+k(m-3),
\end{align*}
from which we conclude that $n$ may be written as the sum of $5+k$ generalized $m$-gonal numbers (again using Lemma \ref{lem:conguniversal}). If $5+k\leq m-4$ (i.e., $m\geq k+9$ which is automatically true for $m\geq 15$), then we see that $n$ is represented as long as $k_1\geq k-1$. On the other hand, if $m<k+9$, then we note that 
\[
P_m(-k)=(m-2)\frac{k^2+k}{2}-k \implies k_2=m-2-k= P_m(-k)-(m-2)\left(\frac{k^2+k}{2}-1\right)
\]
and write
\[
n=(m-2)k_1+k_2=(m-2)\left(k_1+1-\frac{k^2+k}{2}\right)+P_m(-k).
\]
Using Lemma \ref{lem:conguniversal}, $n$ may hence be written as the sum of $6\leq m-4$ generalized $m$-gonal numbers as long as $k_1\geq \frac{k^2+k}{2}-1$. 

It remains to show that $n$ may be represented in the finitely many cases $0\leq k_1<k-1$ (resp. $0\leq k_1< \frac{k^2+k}{2}-1$) when $m\geq k+9$ (resp. $10\leq m<k+9$), with $2\leq k\leq 6$. First suppose that $m\geq k+9$. For $0\leq k_1< k-1$ we write 
\[
(m-2)k_1+k_2= mk_1 + m-2-2k_1-k.
\]
For $m\geq 2+2k_1+k$ (in particular, since $k_1\leq k-2$ and $k\leq 6$, this holds for $m\geq 16$) we see that $n$ may be represented by using $k_1$ choices of $m$ and $m-2-2k_1-k$ choices of $1$, for which we need (using that $0\leq k_1<k-1$ and $k\geq 2$) 
\[
k_1+m-2-2k_1-k=m-2-k_1-k\leq m-2-k\leq m-4
\]
variables. The result follows except for the case $m=15$, $k_1=4$, and $k=6$, for which one may check by hand that $59=12+3\cdot 15 + 2\cdot 1$ may be written as the sum of 6 generalized $15$-gonal numbers. 

In the remaining cases, we have $10\leq m< k+9\leq 15$ and $0\leq k_1< \frac{k^2+k}{2}-1\leq 20$. There are hence only a finite number of $n$ which need to be checked, and this may be done by hand.\qedhere
\end{proof}
\begin{remark}
After reducing the proof to a check of finitely-many cases, we simply check the remaining cases by hand for $10\leq m<k+9\leq 15$ and $0\leq k_1<20$. One may instead drop the restriction $10\leq m<15$ (leaving $m$ arbitrary as a variable) and use (the following list is complete for $P_m(x)\leq 21m-35$ because the sequence $(P_m(0),P_m(1),P_m(-1),P_m(2),P_m(-2),\dots)$ is increasing for $m>3$)
\begin{multline}\label{eqn:Pmvals}
\{ P_{m}(x): x\in\Z\}=\{0,1,m-3,m,3m-8,3m-3, 6m-15, 6m-8,\\
 10m-24,10m-15, 15m-35, 15m-24, 21m-48,21m-35, \dots\}
\end{multline}
 to systematically write $(m-2)k_1+(m-2-k)$ (thinking of this as a polynomial in $m$) as a linear combination of the polynomials occurring in \eqref{eqn:Pmvals} for each choice of $0\leq k_1<20$.
\end{remark}

\begin{subsection}{Inequalities for large \texorpdfstring{$r$}{r}}

For $n\in\N$ and $r<m-3$, we write $n\in\N$ in the form 
\begin{equation}\label{eqn:splitn}
    n = (m-2)k_1 + rk_2 + k_3,
\end{equation}
 where $0 \leq k_2 \leq \floor{\frac{m-3}{r}}$ and $-5 \leq k_3 \leq r-6$. In order to obtain an upper bound, we need the following extension of Lemma \ref{lem:conguniversal}. 
\begin{lemma}\label{lem:splituniversal}
Suppose that $7\leq r<m-3$. For $k_1\in\N_0$ and $-5\leq k_3\leq r-6$, the integer $k_1(m-2)+k_3\in\N_0$ is represented by the sum of at most $r-1$ generalized $m$-gonal numbers unless $-5\leq k_3\leq -1$ and $k_1\leq |k_3|-1$.
\end{lemma}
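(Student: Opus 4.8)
The plan is to reduce the statement to Lemma \ref{lem:conguniversal} together with a short bookkeeping argument on how to distribute the ``extra'' generalized $m$-gonal numbers. First I would record the key small values we have at our disposal: $P_m(0)=0$, $P_m(1)=1$, $P_m(-1)=m-3$, and $P_m(-k)=(m-2)\tfrac{k^2+k}{2}-k$ for $k\geq 1$, so that $P_m(-k)\equiv -k \pmod{m-2}$ and $P_m(-k)\leq (m-2)\tfrac{k^2+k}{2}$; these are exactly the building blocks already used in the proof of Theorem \ref{thm:lmr}~(1). The strategy is: use $5$ of the $r-1$ available variables to absorb a multiple of $m-2$ via Lemma \ref{lem:conguniversal}, and use the remaining $r-6\geq 1$ variables to produce the residue $k_3$ modulo $m-2$ (note $r\geq 7$ is exactly what guarantees at least one such variable, and more generally $|k_3|\leq r-6$ leaves enough room).

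The case split is on the sign of $k_3$. If $0\leq k_3\leq r-6$, then $k_1(m-2)+k_3$ is the sum of $k_3$ copies of $P_m(1)=1$ and (by Lemma \ref{lem:conguniversal}) $5$ generalized $m$-gonal numbers summing to $k_1(m-2)$; this uses $k_3+5\leq r-1$ variables. If $-5\leq k_3\leq -1$, write $k=|k_3|$, so $1\leq k\leq 5$. Here I would use the identity
\[
k_1(m-2)+k_3 = (m-2)\!\left(k_1+1-\tfrac{k^2+k}{2}\right)+P_m(-k),
\]
valid (with the first factor in $\N_0$) precisely when $k_1\geq \tfrac{k^2+k}{2}-1$; in that range Lemma \ref{lem:conguniversal} writes the multiple of $m-2$ as $5$ generalized $m$-gonal numbers, giving a representation by $5+1=6\leq r-1$ generalized $m$-gonal numbers. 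When $k_1\leq \tfrac{k^2+k}{2}-2$ but still $k_1\geq k-1$, I would instead peel off $k$ copies of $P_m(-1)=m-3$ and write
\[
k_1(m-2)+k_3 = k(m-3)+(m-2)(k_1-k+1),
\]
whose right-hand side, again via Lemma \ref{lem:conguniversal}, is a sum of $k+5$ generalized $m$-gonal numbers; since $k\leq 5$ this is $\leq 10\leq r-1$ (using $r\geq 7$ one should check $k+5\leq r-1$ is automatic in the subrange where this branch is needed, or bound $k$ more carefully — $k$ can be taken $\leq \min(5,?)$ and the branch $k_1\geq k-1$ with $k_1<\tfrac{k^2+k}{2}-1$ forces $k\geq 2$). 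The only remaining possibility is $k_1\leq k-2=|k_3|-1$, which is exactly the excluded case in the statement, so there is nothing to prove there.

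The main obstacle is purely combinatorial rather than arithmetic: one has to verify that in each branch the total number of generalized $m$-gonal numbers used is genuinely $\leq r-1$, and in particular to handle the ``medium $k_1$'' branch ($k-1\leq k_1<\tfrac{k^2+k}{2}-1$) without spending too many variables — here $k+5$ could in principle exceed $r-1$ when $r$ is as small as $7$, so one needs either to observe that this branch only occurs for small $k$ or to replace the crude ``$k$ copies of $m-3$'' decomposition by a smarter one using $P_m(-k)$ directly for part of the contribution. Since $k\le 5$ throughout and $r\ge 7$, a careful accounting (possibly using that $P_m(-2)=3m-8$, $P_m(-3)=6m-15$, etc., to cover several units of $m-2$ at once) closes the gap; this is the one place where the bound $r\ge 7$ is actually used in an essential way, matching the hypothesis of the lemma.
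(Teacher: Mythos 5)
Both displayed identities in your treatment of $-5\leq k_3\leq -1$ are false as written. With $k=|k_3|$ one has $P_m(-k)=(m-2)\tfrac{k^2+k}{2}-k$, so the correct decompositions are $k_1(m-2)+k_3=(m-2)\bigl(k_1-\tfrac{k^2+k}{2}\bigr)+P_m(-k)$, valid for $k_1\geq\tfrac{k^2+k}{2}$, and $k_1(m-2)+k_3=k\,P_m(-1)+(m-2)(k_1-k)$, valid for $k_1\geq k$; your versions carry an extra $+(m-2)$ because you imported the formulas from the proof of Theorem \ref{thm:lmr} (1), where the leftover residue is $m-2-k$ rather than $-k$. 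The same confusion produces the false equality ``$k-2=|k_3|-1$'' at the end of your case split. With the corrected identities the leftover range is $k_1\leq k-1=|k_3|-1$, which is exactly the lemma's excluded set, so the skeleton of your argument does survive the correction.

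The genuine gap is the ``medium $k_1$'' branch. After correction it covers $k\leq k_1\leq\tfrac{k^2+k}{2}-1$ at a cost of $k+5$ variables, and, as you yourself observe, $k+5>r-1=6$ for $r=7$ as soon as $k\geq 2$. This cannot be dismissed with ``a careful accounting closes the gap'': for $r=7$ it \emph{is} the content of the lemma, and the paper's proof consists precisely of an explicit finite verification (organized as a recursively built table) that each of the roughly twenty integers $(m-2)k_1+k_3$ with $-5\leq k_3\leq -2$ and $|k_3|\leq k_1\leq\tfrac{|k_3|^2+|k_3|}{2}-1$ is a sum of at most \emph{five} generalized $m$-gonal numbers --- for instance $2(m-2)-2=2P_m(-1)$, $6(m-2)-4=2P_m(-2)$, $7(m-2)-4=P_m(-3)+P_m(-1)$, $9(m-2)-5=P_m(-3)+P_m(-2)$, $11(m-2)-5=P_m(-4)+P_m(-1)$. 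You correctly name the needed repair (use $P_m(-2)=3m-8$, $P_m(-3)=6m-15$, etc.\ to absorb several units of $m-2$ at once) but never carry it out; since the decomposition you actually propose violates the variable bound for $r=7$, the proof is incomplete without this case-by-case check.
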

\begin{proof}
Using Lemma \ref{lem:conguniversal}, we may represent $(m-2)k_1$ with the first $5$ variables. If $0\leq k_3\leq r-6$, then we may represent $k_3$ by taking $P_{m}(x_j)\in\{0,1\}$ for the remaining $r-6$ generalized $m$-gonal numbers.

Now suppose that $-5\leq k_3\leq -1$. We note that 
\[
P_m(x)=(m-2)P_3(-x)+x.
\]
Hence in particular we have 
\[
P_m\left(k_3\right)=(m-2)P_3(-k_3) +k_3. 
\]
We may therefore rewrite \eqref{eqn:splitn} as 
\[
n=(m-2)\left(k_1-P_3(-k_3)\right)+rk_2 +  P_m\left(k_3\right),
\]
and for $k_1\geq P_3(-k_3)$ we conclude that $n-rk_2$ may be represented with the first $6\leq r-1$ variables.

It remains to show that $n$ is represented for the cases $k_1< P_3(-k_3)$ and $-5\leq k_3\leq -1$. For all of these cases other than the exceptional cases
\[
(m-2)k_1+k_3\in \{m-7,m-6,m-5,m-4,2m-9,2m-8,2m-7,3m-11,3m-10,4m-13\}
\]
we may use \eqref{eqn:Pmvals} (thinking of $(m-2)k_1+k_3\in\Z[m]$ as a polynomial in $m$) to find a representation 
\[
(m-2)k_1+k_3=\sum_{j=1}^5P_m(x_j)
\]
in $5< r-1$ variables.  We encode the representations in a graph in the following manner. Write the numbers in rows and columns, where the $(A,B)$ position corresponds to $A(m-2)+B$. If we have a representation 
\[
A(m-2)+B=\sum_{j=1}^{d} P_m(x_j)
\]
in $d$ variables and $A(m-2)+B+P_m(x)=C(m-2)+D$, then in the $(C,D)$ location of the graph we write $(C,D)_{d+1}^{x}$ to indicate that we have a representation of $C(m-2)+D$ in $d+1$ variables where we take $x_{d+1}=x$. One may then reconstruct the representation of $C(m-2)+D$ by recursively working backwards through the graph; for example, if we have $(C,D)_{d+1}^{-1}$, then we obtain the representation by looking at $(C-1,D+1)_{d}^{*}$ and continuing recursively until we have $d=1$. To summarize, one traverses backwards through the graph as follows:
\begin{align*}
(C,D)_{d+1}^{-1}&\to (C-1,D+1)_{d}^{*},\\
(C,D)_{d+1}^{1}&\to (C,D-1)_{d}^{*},\\
(C,D)_{d+1}^{2}&\to (C-1,D-2)_{d}^{*},\\
(C,D)_{d+1}^{-2}&\to (C-3,D+2)_{d}^{*},\\
(C,D)_{d+1}^{-4}&\to (C-10,D+4)_{d}^{*}.
\end{align*}
This yields the following graph encoding the representations (we add the unnecessary entries $(C,-1)$ in order to include the representations of some integers in the $(-5)$th column)
\[
\begin{array}{lllll}
(1,-5)&(1,-4)&(1,-3)&(1,-2)&(1,-1)_{1}^{-1}\\
(2,-5)&(2,-4)&(2,-3)&(2,-2)_2^{-1}&(2,-1)_{3}^{1}\\
(3,-5)&(3,-4)&(3,-3)_3^{-1}&(3,-2)_1^{-2}&(3,-1)_{2}^{1}\\
(4,-5)&(4,-4)_4^{-1}&(4,-3)_2^{-2}&&(4,-1)_{4}^{2}\\
(5,-5)_5^{-1}&(5,-4)_3^{-1}&(5,-3)_4^{-2}&&\\
(6,-5)_4^{-1}&(6,-4)_2^{-2}&(6,-3)_1^{-3}&&\\
(7,-5)_3^{-1}&(7,-4)_2^{-1}&&&\\
(8,-5)_3^{-1}&(8,-4)_4^{1}&&&\\
(9,-5)_2^{-2}&(9,-4)_3^{1}&&&\\
(10,-5)_4^{-1}&(10,-4)_1^{-4}&&&\\
(11,-5)_2^{-1}&&&&\\
(12,-5)_4^{-4}&&&&\\
(13,-5)_3^{-4}&&&&\\
(14,-5)_5^{-4}&&&&\\
(15,-5)_{1}^{-5}&&&&
\end{array}
\]
\end{proof}
For the exceptional cases $-5\leq k_3\leq -1$ and $k_1\leq |k_3|-1$, we use the following lemma.
\begin{lemma}\label{lem:exceptional}
If $7\leq r<m-3$, $k_2\geq 1$, and $k_1(m-2)+k_3$ satisfies $-5\leq k_3\leq -1$ and $0\leq k_1\leq |k_3|-1$, then 
\[
n=k_1(m-2)+rk_2+k_3
\]
may be represented by 
\[
\sum_{j=1}^{r-1}P_m(x_j)+r\sum_{j=r}^{r+k_2-2}P_m(x_j).
\]
In particular, we may take $\ell_2\geq k_2-1$. 
\end{lemma}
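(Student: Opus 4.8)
The plan is to peel the exceptional block $k_1(m-2)+k_3$ off of $n$ together with a single copy of $r$, handle that with the $r-1$ unweighted variables, and use the remaining weighted variables as copies of $r\cdot P_m(1)$. Concretely, I would write
\[
n=\bigl(k_1(m-2)+k_3+r\bigr)+r(k_2-1),
\]
observe that $r(k_2-1)=r\sum_{j=r}^{r+k_2-2}P_m(1)$ is represented by the $k_2-1$ weighted generalized $m$-gonal numbers (the sum being empty when $k_2=1$), and thereby reduce the lemma to showing that
\[
W:=k_1(m-2)+k_3+r
\]
is a sum of at most $r-1$ generalized $m$-gonal numbers. Since $r\geq 7$ and $k_3\geq-5$ we have $W\geq r-5\geq 2>0$, so at least positivity is not an obstruction.

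To represent $W$ I would split into the finitely many cases $k_1\in\{0,1,2,3,4\}$ permitted by $k_1\leq|k_3|-1$ and build $W$ from the small generalized $m$-gonal numbers $P_m(0)=0$, $P_m(1)=1$, $P_m(-1)=m-3$, $P_m(2)=m$, $P_m(-2)=3m-8$. For each $k_1$ I would fix a partial sum $F=k_1(m-2)+d_{k_1}$ with $0\leq d_{k_1}\leq 2$: namely $F=0$ for $k_1=0$; $F=P_m(2)$ for $k_1=1$; $F=P_m(2)+P_m(1)+P_m(-1)=2m-2$ for $k_1=2$; $F=P_m(-2)+2P_m(1)=3m-6$ for $k_1=3$; and $F=P_m(-2)+P_m(2)=4m-8$ for $k_1=4$ (so $d_0=d_3=d_4=0$ and $d_1=d_2=2$). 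Then
\[
W=F+(r+k_3-d_{k_1})\,P_m(1),
\]
which, after padding with copies of $P_m(0)=0$, is a sum of $a_{k_1}+(r+k_3-d_{k_1})$ generalized $m$-gonal numbers, where $a_{k_1}$ is the number of summands of $F$. It then remains to check two inequalities in each case: first that $r+k_3-d_{k_1}\geq 0$, which holds because $d_{k_1}-k_3\leq 2-(-5)=7\leq r$; and second that $a_{k_1}+(r+k_3-d_{k_1})\leq r-1$, equivalently $k_3\leq d_{k_1}-a_{k_1}-1$, which a short inspection shows is exactly what the hypothesis $k_1\leq|k_3|-1$ guarantees (for instance $k_1=3$ forces $k_3\leq-4$, precisely the bound needed). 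This yields the desired representation of $W$, hence of $n$, and since we have used $r-1$ unweighted and $k_2-1$ weighted variables, the ``in particular'' assertion is immediate.

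The main obstacle is the bookkeeping in the choice of $F$: the obvious candidate $F=k_1\,P_m(2)=k_1m$ already overshoots $W$ when $k_1=2$, $k_3=-5$ and $r$ is as small as $7$, forcing one to the slightly smaller blocks $F=2m-2$ (for $k_1=2$) and $F=m$ (for $k_1=1$). This is precisely where the hypothesis $r\geq 7$ enters, while the complementary hypothesis $k_1\leq|k_3|-1$ is exactly what keeps the total term count from exceeding $r-1$; both hypotheses are thus used and are sharp for this argument. Everything else reduces to a routine finite check over the admissible pairs $(k_1,k_3)$.
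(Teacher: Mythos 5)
Your proposal is correct and follows essentially the same route as the paper: both absorb one copy of $r$ into the unweighted block, reducing to representing $W=k_1(m-2)+k_3+r$ by at most $r-1$ small generalized $m$-gonal numbers via a finite check over the admissible pairs $(k_1,k_3)$, with the hypotheses $r\geq 7$ and $k_1\leq|k_3|-1$ entering in exactly the same two inequalities. The only (cosmetic) difference is your choice of base blocks --- you use $P_m(-2)=3m-8$ for $k_1\in\{3,4\}$, whereas the paper writes the block as $(k_1-j)P_m(2)+jP_m(-1)$ with $j\in\{0,1,2\}$ chosen according to the size of $|k_3|+2k_1$.
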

\begin{proof}
\noindent

\noindent
For some $0\leq j\leq k_1$ we have
\begin{multline*}
n=(m-2)k_1+(k_2-1)r+(r+k_3)=(k_1-j) m+j(m-3) +(r+k_3-2k_1+3j) +(k_2-1)r\\
=(k_1-j) P_m(2)+jP_m(-1) +(r+k_3-2k_1+3j)P_m(1) +(k_2-1)r.
\end{multline*}
If $r+k_3-2k_1+3j\geq 0$, then 
\[
(k_1-j)P_m(2)+jP_m(-1)+(r+k_3-2k_1+3j)P_m(1)
\]
is the sum of $r+k_3-k_1+3j$ generalized $m$-gonal numbers. Hence if the system of equations
\begin{align*}
r+k_3-k_1+3j&\leq r-1,\\
r+k_3-2k_1+3j&\geq 0
\end{align*}
holds, then we are done. If $|k_3|+2k_1\leq 6$, then since $r-1\geq 6$ we may take $j=0$. For $7\leq |k_3|+2k_1\leq 10$ the inequality $k_1\leq |k_3|-1$ implies that $|k_3|+k_1> 3$, and hence we may take $j=1$ in that case. Finally, if $11\leq |k_3|+2k_1\leq 13$, then $|k_3|+k_1>6$, so we may take $j=2$ in this case. 

\end{proof}

We are now ready to obtain an upper bound for $\ell_{m,r,r-1}$ for large $r$.
\begin{proposition}\label{prop:upperbound}
If $7\leq r< m-3$, then we have $\ell_{m,r,r-1} \leq \floor{\frac{m-3}{r}}+(r-1)$.
\end{proposition}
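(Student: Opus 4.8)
The plan is to prove that the quadratic polynomial $P_{m,\bm a_{r,r-1,\ell}}$ of \eqref{eqn:Pmadef} with $\ell=\floor{\frac{m-3}{r}}+(r-1)$ is universal; this immediately gives the stated upper bound for $\ell_{m,r,r-1}$. Here $\bm a_{r,r-1,\ell}=(\bm 1,\bm r)$ has $\ell_1=r-1$ entries equal to $1$ followed by $\ell_2=\floor{\frac{m-3}{r}}$ entries equal to $r$, so that $P_{m,\bm a_{r,r-1,\ell}}(\bm x)=\sum_{j=1}^{r-1}P_m(x_j)+r\sum_{j=r}^{\ell}P_m(x_j)$; note $\ell_2\ge 1$ and $m\ge 11$ because $7\le r<m-3$. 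Given $n\in\N$, I would first write it in the shape \eqref{eqn:splitn}, i.e. $n=(m-2)k_1+rk_2+k_3$ with $0\le k_2\le\floor{\frac{m-3}{r}}$ and $-5\le k_3\le r-6$. The guiding idea is to use the $\ell_2$ weight-$r$ variables to absorb the term $rk_2$ by taking $k_2$ of them equal to $1$ and the rest equal to $0$ (possible since $k_2\le\ell_2$), and to use the $r-1$ weight-$1$ variables to represent $(m-2)k_1+k_3$ as a sum of generalized $m$-gonal numbers.

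I would then split according to whether we lie in the exceptional case of Lemma \ref{lem:splituniversal}. If not (that is, if we do not have both $-5\le k_3\le-1$ and $k_1\le|k_3|-1$), then $(m-2)k_1+k_3\in\N_0$ is a sum of at most $r-1$ generalized $m$-gonal numbers by Lemma \ref{lem:splituniversal}, and combining this with the weight-$r$ contribution above represents $n$. If we are in the exceptional case and $k_2\ge 1$, then Lemma \ref{lem:exceptional} represents $n$ using the $r-1$ weight-$1$ variables together with only $k_2-1\le\ell_2-1$ weight-$r$ variables, which again suffices (the remaining weight-$r$ variables are set to $0$). This leaves only the exceptional case with $k_2=0$, where $n=(m-2)k_1+k_3$ with $-5\le k_3\le-1$, $0\le k_1\le|k_3|-1$ and $n\ge 1$; enumerating the finitely many admissible pairs $(k_1,k_3)$ shows that $n$ must be one of the ten integers
\[
m-7,\ m-6,\ m-5,\ m-4,\ 2m-9,\ 2m-8,\ 2m-7,\ 3m-11,\ 3m-10,\ 4m-13,
\]
which are precisely the exceptional values appearing in the proof of Lemma \ref{lem:splituniversal}.

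It remains to represent these ten integers, and here I would argue by hand using $P_m(1)=1$, $P_m(-1)=m-3$ and $P_m(2)=m$. Each of the ten values can be written uniquely as $n=j\,P_m(-1)+s$ with $j=\floor{n/(m-3)}\in\{0,1,2,3\}$ and $s=n-j(m-3)\in\{m-4,m-5,m-6,m-7\}$, so in particular $0<s<m-3$. Writing $s=qr+s'$ with $0\le s'\le r-1$ we have $q=\floor{s/r}\le\floor{(m-3)/r}=\ell_2$, so if $j+s'\le r-1$ then $n$ is represented by $j$ weight-$1$ copies of $P_m(-1)$, $s'$ weight-$1$ copies of $P_m(1)$, and $q\le\ell_2$ weight-$r$ copies of $P_m(1)$. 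If instead $j+s'>r-1$ (which forces $j\ge1$, since $s'\le r-1$), I would replace one copy of $P_m(-1)$ by one copy of $P_m(2)$, i.e. use $n=(j-1)P_m(-1)+P_m(2)+(s-3)$ with $0<s-3<m-3$ (here $s\ge m-7\ge 4$), and represent $s-3$ in the same way. The first option fails precisely when $s\equiv-1,-2,\dots,-j\pmod r$ and the second precisely when $s\equiv 2,1,\dots,3-j\pmod r$; since $r\ge 7$ and $j\le 3$ these two blocks of residues are disjoint, so at least one of the two options always succeeds. (Alternatively, since each of the ten values is a polynomial in $m$, one could dispatch them using \eqref{eqn:Pmvals} together with the weight-$r$ variables, in the spirit of the remark following the proof of Theorem \ref{thm:lmr}~(1).)

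The step I expect to be the main obstacle is this last case $k_2=0$: once one is outside the range to which Lemmas \ref{lem:splituniversal} and \ref{lem:exceptional} apply, the delicate point is to represent integers lying just below a small multiple of $m-2$ while spending at most $r-1$ of the unweighted variables. The hypothesis $r\ge 7$—equivalently, having at least $r-1\ge 6$ unrepeated variables—is exactly what supplies the necessary slack, both to feed Lemma \ref{lem:conguniversal} into the two earlier lemmas and to dodge the finitely many obstructive residue classes modulo $r$ above; for smaller $r$ the method must be modified, which is the subject of Section \ref{sec:smallr}.
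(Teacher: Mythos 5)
Your proposal is correct, and its overall skeleton is the same as the paper's: decompose $n$ as in \eqref{eqn:splitn}, absorb $rk_2$ with $k_2\leq\ell_2$ weighted variables, invoke Lemma \ref{lem:splituniversal} in the generic case and Lemma \ref{lem:exceptional} when $k_2\geq 1$, and reduce to the ten integers $m-7,\dots,4m-13$ when $k_2=0$. Where you genuinely diverge is in the endgame. The paper writes $m=rs+t$ with $0\leq t\leq r-1$ and splits into three subcases according to the size of $t$ relative to $|k_3|+k_1$ and $|k_3|+2k_1$, leading to the somewhat intricate system of inequalities solved with $j=t'-\delta$; the point of that normalization is that the weighted variables contribute exactly $rs$ or $r(s-1)$, which requires verifying $s-1\leq\floor{\frac{m-3}{r}}$ (and $s=\floor{\frac{m-3}{r}}$ in the third subcase), and the splitting \eqref{eqn:nsplitsmall} is reused later in the proof of Theorem \ref{thm:lmr}~(2). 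You instead write $n=jP_m(-1)+s$ with $j\leq 3$ and $s\in\{m-4,\dots,m-7\}$, greedily fill $s$ with $q=\floor{s/r}\leq\ell_2$ weighted ones plus $s'$ unweighted ones, and fall back on swapping one $P_m(-1)$ for $P_m(2)$ when $j+s'>r-1$; your observation that the two failure sets of residues $\{-1,\dots,-j\}$ and $\{2,\dots,3-j\}$ modulo $r$ are disjoint for $r\geq 7$, $j\leq 3$ is a clean way to see that one of the two options always works, and I checked that the variable counts ($j+s'$ or $j+s''$ unweighted, $q$ or $q''\leq\ell_2$ weighted) all stay within bounds. Your version is shorter and avoids the $\delta$, $t'$ bookkeeping; the paper's version produces the explicit identity \eqref{eqn:nsplitsmall} that it needs again for the $r\mid m-3$ boundary case of Theorem \ref{thm:lmr}~(2), so if you adopted your argument you would need to supply a small substitute there.
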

\begin{proof}
The claim is equivalent to proving that $P_{m,\bm{a}_{r,r-1,\ell}}$ is universal for $\ell_2=\floor{\frac{m-3}{r}}$. 

Since $k_2\leq \ell_2$, we may represent $rk_2$ with the $r$-times repeated variables all having $x_j\in\{0,1\}$ (i.e., $P_m(x_j)\in\{0,1\}$), and Lemma \ref{lem:splituniversal} implies that $n-rk_2=(m-2)k_1+k_3$ may be represented by the initial $r-1$ variables unless $-5\leq k_3\leq -1$ and $k_1\leq |k_3|-1$. 

We finally deal with the cases $n=k_1(m-2)+k_2r+k_3$ with $0\leq k_1\leq |k_3|-1$ and $-5\leq k_3\leq -1$. If $k_2\geq 1$, then Lemma \ref{lem:exceptional} implies that $n$ is represented. 

It remains to resolve the $k_2=0$ case for $-5\leq k_3\leq -1$ and $1\leq k_1\leq |k_3|-1$.  In other words, we need to check the representations of the 10 integers $k_1(m-2)+k_3\in \{m-7,m-6,m-5,m-4,2m-9,2m-8,2m-7,3m-11,3m-10,4m-13\}$. We write 
\[
m=rs+t
\]
for some $0\leq t\leq r-1$. This gives
\[
n=k_1(m-2)+k_3= \left(k_1-1\right) P_m(2) +rs+t+k_3-2k_1.
\]
If $t+k_3-2k_1\geq 0$, then we are done because $k_1-1+(t+k_3-2k_1)<t\leq r-1$. If $t\leq |k_3|+k_1$, then we may write (note that $s\geq 1$ because otherwise $m=t\leq r-1$, which contradicts the assumption that $m-3> r\geq 7$)
\begin{equation}\label{eqn:nsplitsmall}
n=\left(k_1-j-1\right)P_m(2)+jP_m(-1) + (s-1)r + t+k_3-2k_1+3j+r.
\end{equation}
Noting that $s-1\leq \left\lfloor\frac{m-3}{r}\right\rfloor$, we are done as long as 
\begin{align*}
t+k_3-2k_1+r+3j&\geq 0,\\
k_1-j-1&\geq 0,\\
t+k_3-k_1+3j&\leq 0,
\end{align*}
with the last inequality coming from the fact that $n-(s-1)r$ must be represented by $k_1-j-1+j + t+k_3-2k_1+3j+r\leq r-1$ generalized $m$-gonal numbers.  As in the proof of Lemma \ref{lem:exceptional} this holds for some $j\in \{0,1\}$. 

We finally deal with the case $|k_3|+k_1< t<|k_3|+2k_1$. Since $t>|k_3|+k_1\geq 3$ in this case, we have $m-3=rs+t-3\geq rs$ and hence $s=\left\lfloor\frac{m-3}{r}\right\rfloor$. In this case, we rewrite 
\[
n=k_1P_{m}(-1)+k_1+k_3=\left(k_1-1-j\right)P_{m}(-1)+jP_m(2) + rs +t-3-3j+k_1+k_3.
\]
Writing $t=k_1+|k_3|+t'$ with $1\leq t'<k_1$, we are done as long as 
\begin{align*}
t-3+k_1+k_3-3j&\geq 0,&2k_1-3-3j+t'&\geq 0,\\
k_1-1-j&\geq 0,\qquad \qquad\qquad \Leftrightarrow&k_1-1-j&\geq 0,\\
t+2k_1+k_3-3j-4 &\leq r-1,&3k_1 + t'-3j-4&\leq r-1,
 \end{align*}
with the last inequality coming from the fact that we must write $n-rs$ as the sum of at most $k_1-1+t-3-3j+k_1+k_3$ generalized $m$-gonal numbers.  Setting $\delta:=1$ if $t'=k_1-1$ and $\delta=0$ otherwise, we claim that $j=t'-\delta$ satisfies the above system of inequalities. Since $j\leq t'\leq k_1-1$, the second inequality automatically holds. The first inequality 
\[
2k_1-3-3j+t'=2\left(k_1-t'\right) -3 +3\delta\geq 0
\]
holds because $k_1-t'\geq 1$, with $k_1-t'=1$ if and only if $\delta=1$. The third inequality becomes 
\[
3k_1-2t'+3\delta -4\leq r-1. 
\]
Note that since $t'\geq 1$, we have $-2t'+3\delta\leq -2$ unless $t'=k_1-1\leq 2$. In the exceptional case $k_1\leq 3$ and $t'=k_1-1$, we have 
\[
3k_1-2t'+3\delta-4= k_1+1\leq 4<r-1
\]
Otherwise, we have $-2t'+3\delta \leq -2$, $k_1\leq 4$, and $r\geq 7$, so we find that
\[
3k_1-2t'+3\delta -4\leq 3\left(k_1-2\right)\leq 6\leq r-1,
\]
and the claim follows.
\end{proof}
We next use Guy's argument to obtain a lower bound for $\ell_{m,r,r-1}$. 
\begin{proposition}\label{prop:lowerbound}
We have $\ell_{m,r,r-1} \geq \ceil{\frac{m-3}{r}}+(r-2)$.
\end{proposition}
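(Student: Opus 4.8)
The approach is Guy's argument, applied to the single integer $n=m-4$. Recall from \eqref{eqn:Pmvals} that the generalized $m$-gonal numbers are $0,1,m-3,m,3m-8,\dots$ and that this list is strictly increasing for $m>3$; in particular, when $m\ge 5$ the integer $m-4$ is not itself a generalized $m$-gonal number, and the smallest generalized $m$-gonal number exceeding $1$ is $m-3>m-4$. (For $m\le 4$ the asserted inequality is immediate since $\ell\ge \ell_1=r-1$ always, so we may assume $m\ge 5$.)

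First I would assume that $P_{m,\bm{a}_{r,r-1,\ell}}$ is universal and fix a representation
\[
m-4=\sum_{j=1}^{r-1}P_m(x_j)+r\sum_{j=r}^{r+\ell_2-1}P_m(x_j),\qquad \ell_2:=\ell-(r-1).
\]
Since every summand is a non-negative multiple of a generalized $m$-gonal number and their total $m-4$ is strictly smaller than $m-3$, each $P_m(x_j)$ appearing must equal $0$ or $1$. Letting $a$ be the number of indices $1\le j\le r-1$ with $P_m(x_j)=1$ and $b$ the number of indices $r\le j\le r+\ell_2-1$ with $P_m(x_j)=1$, this gives $m-4=a+rb$ with $0\le a\le r-1$ and $0\le b\le \ell_2$.

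The next step is to note that the constraint $0\le a\le r-1$ forces $b=\floor{\frac{m-4}{r}}$ (with $a=m-4-rb$), so universality requires $\ell_2\ge \floor{\frac{m-4}{r}}$. I would then invoke the elementary identity $\floor{\frac{N-1}{r}}=\ceil{\frac{N}{r}}-1$ (valid for all integers $N$ and all $r\ge 1$) with $N=m-3$ to conclude $\ell_2\ge \ceil{\frac{m-3}{r}}-1$, and hence
\[
\ell=(r-1)+\ell_2\ge \ceil{\frac{m-3}{r}}+(r-2).
\]
There is no genuine obstacle here: the argument is a direct generalization of Guy's lower bound $\ell_m\ge m-4$ (the case $r=1$), and the only points requiring care are the monotonicity of the sequence in \eqref{eqn:Pmvals}, which legitimizes the reduction to $P_m(x_j)\in\{0,1\}$, and the floor-to-ceiling bookkeeping at the end.
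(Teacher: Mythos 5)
Your proposal is correct and follows essentially the same route as the paper: Guy's argument applied to $n=m-4$, reducing every summand to $P_m(x_j)\in\{0,1\}$ via the fact that the next generalized $m$-gonal number is $m-3>m-4$, and then extracting the bound on $\ell_2$. The only cosmetic difference is that you pin down the exact value $b=\floor{\frac{m-4}{r}}$ before converting floors to ceilings, whereas the paper works directly with the inequality $m-4\leq r-1+r\ell_2$; both yield $\ell\geq\ceil{\frac{m-3}{r}}+(r-2)$.
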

\begin{proof}
Following Guy \cite{Guy}, if $P_{m,\bm{a}}$ is universal, then it must necessarily represent $m-4$. We write 
\begin{equation}\label{eqn:m-4sum}
\begin{split}
    m-4 = \sum_{j=1}^{r-1}P_m(x_j) + r\sum_{j=r}^{\ell}P_m(x_j)
\end{split}
\end{equation}
By \eqref{eqn:Pmvals}, we have $P_m(0) = 0$,  $P_m(1) = 1$, and $P_m(x) >m-4$ for $x\notin\{0,1\}$, so any representation of $m-4$ may only contain $x_j\in\{0,1\}$. Thus \eqref{eqn:m-4sum} yields the inequality
\[
m-4\leq r-1+r(\ell-r+1),
\]
from which we conclude that 
\begin{equation}\label{eqn:lmreval}
\begin{split}
\ell_{m,r,r-1}\geq \ell \geq \frac{m-3}{r}+r-2.
\end{split}
\end{equation}
This yields the claim.
\end{proof}

We are now ready to Prove Theorem \ref{thm:lmr} (2). 
\begin{proof}[Proof of Theorem \ref{thm:lmr} (2)]
The upper bound in Proposition \ref{prop:upperbound} and the lower bound in Proposition \ref{prop:lowerbound} match unless $r\mid m-3$. In the remaining case, we write $m-3=rs$ and note that the claim is equivalent to proving that $P_{m,\bm{a}_{r,r-1,\ell}}$ is universal for $\ell_2=s-1$. Recall the presentation \eqref{eqn:splitn} of $n$. By Lemma \ref{lem:splituniversal}, if $0\leq k_2\leq s-1$, then $n-rk_2$ may be represented by the initial $r-1$ generalized $m$-gonal numbers and we only require $k_2\leq\ell_2$ variables to represent $rk_2$, unless $k_3<0$ and $k_1\leq |k_3|-1$. For $k_3<0$ and $k_1\leq |k_3|-1$, we use Lemma \ref{lem:exceptional} to see that $n$ is represented with $\ell_2=s-1$ unless $k_2=0$, while for $k_2=0$ we use the splitting \eqref{eqn:nsplitsmall} (with $t=3$) with $j\in\{0,1\}$ to obtain a representation. 

For $k_2=s$, rewrite 
\[
n=k_1(m-2)+m-3+k_3=(k_1+1)(m-2)+k_3+1. 
\]
Again using Lemma \ref{lem:splituniversal}, we see that $n$ is represented by $r-1$ generalized $m$-gonal numbers unless ($-5\leq k_3\leq -2$ and $k_1+1\leq |k_3|-2$) or $k_3=r-6$. We use \eqref{eqn:nsplitsmall} in the case $-5\leq k_3\leq -2$. In the case of $k_3=r-6$, we then rewrite 
\[
n=(k_1+1)(m-2)+r -5.
\]
In this case, Lemma \ref{lem:splituniversal} implies that $n$ is represented unless $k_1\leq 3$, while Lemma \ref{lem:exceptional} with $k_2=1$ yields the claim for $k_1\leq 3$. 

\end{proof}

\end{subsection}

\end{section}

\begin{section}{Small choices of \texorpdfstring{$r$}{r}}\label{sec:smallr}
In this section, we consider cases for small $r$. 
\begin{proof}[Proof of Theorem \ref{thm:lmr} (3)]
We first assume that $r=2$ and $m\geq 14$. Note that since $m-3\not\equiv m-2\pmod{2}$, any representation of $m-2$ by $P_{m,\bm{a}_{2,1,\ell}}$ must have $\ell_2\geq \floor{\frac{m}{2}}-1$, or in other words $\ell\geq \floor{\frac{m}{2}}$, yielding the lower bound $\ell_{m,2,1}\geq \floor{\frac{m}{2}}$.

It remains to show that the form $P_{m,\bm{a}_{2,1,\ell}}$ with $\ell:=\floor{\frac{m}{2}}$ is indeed universal. We present $n$ in the form
\begin{equation}\label{eqn:splitnr=2}
n=2(m-2)k_1+2k_2+k_3.
\end{equation}
with $-5\leq k_2\leq m-8$ and $k_3\in\{0,1\}$ (so $2k_2+k_3$ precisely attains every residue modulo $2(m-2)$ once). By Lemma \ref{lem:conguniversal}, we may represent $2(m-2)k_1$ as a sum of the type $2\sum_{j=2}^6 P_m(x_j)$. If $0\leq k_2\leq \ell-6$, then we conclude that $n$ may be represented by $P_{m,\bm{a}_{2,1,\ell}}$.

We next consider $\ell-5 \leq k_2\leq m-8$. Choosing $j\in\{-1,2\}$ such that $P_m(j)\equiv k_3\pmod{2}$, we may rewrite \eqref{eqn:splitnr=2} as 
\[
n=2(m-2)k_1+2\left(k_2+\frac{k_3-P_m(j)}{2}\right)+P_m(j).
\]
Setting $k:=k_2+\frac{k_3-P_m(j)}{2}$, the inequalities for $k_2$ imply that
\[
\ell-5-\frac{m}{2} \leq  k\leq m-8- \frac{m-4}{2}=\frac{m}{2}-6.
\]
Since $k$ is an integer, we conclude that $-5\leq k\leq \ell-6$. For $0\leq k\leq \ell-6$ we are done by Lemma \ref{lem:conguniversal}, and for $-5\leq k\leq -1$ we use Lemma \ref{lem:splituniversal} (choosing $r=7$) to conclude that $(m-2)k_1+k$ may be written as the sum of at most $6\leq \ell-1$ (because $m\geq 14$ we have $\ell\geq 7$) generalized $m$-gonal numbers unless $k_1\leq |k|-1$.

We next consider the cases $-5\leq k_2\leq -1$. In this case we write 
\[
n= 2\left(k_1(m-2)+k_2\right) +k_3
\]
and, since $-5\leq k_2\leq -1$, Lemma \ref{lem:splituniversal} (choosing $r=7$) implies that $k_1(m-2)+k_2$ may be written as the sum of at most $6\leq \ell-1$ (because $m\geq 14$ we have $\ell\geq 7$) generalized $m$-gonal numbers unless $k_1\leq |k_2|-1$. 

Note that $k_3\in \{P_m(0),P_m(1)\}$. It remains to show that for each $-5\leq k'\leq -1$ and $0\leq k_1\leq |k'|-1$ and $-1\leq j'\leq 2$ there exists a representation 
\begin{equation}\label{eqn:r=2special}
P_m(x_1)+2\sum_{j=2}^{\ell}P_m(x_j)= n=2\left((m-2)k_1+k'\right)+P_m(j').
\end{equation}
For each $n$ of the form \eqref{eqn:r=2special}, we claim that we may choose $x_1,\dots, x_{d+1}$ (with $d\in\N_0$) so that $P_m(x_1)\equiv n\pmod{2}$ and 
\begin{equation}\label{eqn:r=2bnd}
0\leq n-P_m(x_1)-2\sum_{j=2}^{d+1} P_m(x_j)\leq 2\ell_2-2d.
\end{equation}
Note that if we may choose $x_j$ in this way, then since 
\[
n':= n-P_m(x_1)-2\sum_{j=2}^{d+1} P_m(x_j)
\]
 is even and less than $2(\ell_2-d)$, we may write $n'$ as a sum of at most $\ell_2-d$ twos, giving a representation of $n$ with $\ell$ variables. It remains to choose the first $d$ of the $x_j$s appropriately. 

We collect the choices of the set $X_n$ such that $x_1\in X_n$, $x_2,\dots, x_{d+1}$ and the corresponding bounds on $n'$ in Table \ref{tab:r=2special}. The bounds on $n'$ are proven, for example, for $2m-14\leq n\leq 2m-7$ and $x_1\in X_n=\{-1,2\}$ by writing (using $m\geq 14$)
\[
0\leq m-14=2m-14 -m \leq n-P_m(x_1)\leq 2m-7-(m-3)=m-4.
\]
Recalling that $\ell_2=\floor{\frac{m}{2}}-1$, we have $m-4<m-3\leq 2\ell_2$ (in general, it suffices to show that $m-14\leq n'\leq 2\ell_2-2d$), and we see that \eqref{eqn:r=2bnd} holds. 
\begin{center}
\begin{table}[th]\caption{Individual case checking for $r=2$\label{tab:r=2special}}
\begin{tabular}{|c|c|c|c|l|}
\hline
Interval with $n$& $x_1\in X_n$ & $d$ & $x_2,\dots,x_{d+1}$ & Bounds on $n'$\\
\hline
\hline
$m-13\leq n\leq m-2$&$\{0,1\}$& $0$&& $m-14\leq n'\leq m-2$\\
\hline
$2m-14\leq n\leq 2m-7$&$\{-1,2\}$& $0$&& $m-14\leq n'\leq m-4$\\
\hline
$3m-17\leq n\leq 3m-10$&$\{0,1\}$& $1$&$-1$& $m-12\leq n'\leq m-4$\\
\hline
$4m-18\leq n\leq 4m-13$&$\{-1,2\}$& $1$&$-1$& $m-12\leq n'\leq m-4$\\
\hline
$5m-21\leq n\leq 5m-18$&$\{0,1\}$& $2$&$2,2$& $m-10\leq n'\leq m-6$\\
\hline
$5m-17\leq n\leq 5m-14$&$\{0,1\}$& $2$&$-1,2$& $m-12\leq n'\leq m-8$\\
\hline
$6m-22\leq n\leq 6m-19$&$\{-2,3\}$& $1$&$-1$& $m-13\leq n'\leq m-5$\\
\hline
$7m-25\leq n\leq 7m-20$&$\{0,1\}$& $3$&$-1,-1,2$& $m-14\leq n'\leq m-8$\\
\hline
$n=8m-26$&$\{-2,3\}$& $2$&$-1,-1$& $m-11\leq n'\leq m-6$\\
\hline
$n=8m-25$, $m$ odd &$\{-2\}$& $2$&$-1,2$& $n'=m-11$\\
\hline
$n=8m-25$, $m$ even &$\{3\}$& $2$&$-1,-1$& $n'=m-10$\\
\hline
$9m-29\leq n\leq 9m-28$&$\{0,1\}$& $4$&$-1,-1,-1,2$& $m-12\leq n'\leq m-10$\\
\hline
$n= 9m-27$&$\{-1\}$& $4$&$-1,-1,-1,-1$& $n'=0$\\
\hline
$n= 9m-26$&$\{-2\}$& $3$&$-1,-1,-1$& $n'=0$\\
\hline
\end{tabular}
\end{table}
\end{center}
We next consider the $r=3$ case. In this case, first note that Theorem \ref{thm:lmr} (1) implies that $3\sum_{j=3}^{m-2}P_m(x_j)$ represents every element of $3\N_0$. Taking $P_m(x_1),P_m(x_2)\in\{0,1\}$, we get a representation of every positive integer.

We note that any representation of $3m-10$ must have $P_m(x_1)\equiv P_m(x_2)\equiv 1\pmod{3}$ and hence for $m\equiv 0\pmod{3}$ the congruence $m\equiv m-3\equiv 0\pmod{3}$ implies that $x_1=x_2=1$ so that 
\[
3m-12=3\sum_{j=3}^{\ell} P_m(x_j).
\]
Dividing by $3$ and using Guy's argument \cite{Guy} again, this implies that $\ell-2\geq m-4$, or in other words $\ell\geq m-2$. 

For $m\equiv 1\pmod{3}$, we take $n=3m-12$ and similarly note that any representation of 
\[
3m-12=P_m(x_1)+P_m(x_2)+3\sum_{j=3}^{\ell} P_m(x_j)
\]
must have $x_1=x_2=0$, again implying that $\ell\geq m-2$. 

For $m\equiv 2\pmod{3}$, we take $n=2m-9$ and similarly note that, since $2m-9<2m-6$, any representation of 
\[
2m-9=P_m(x_1)+P_m(x_2)+3\sum_{j=3}^{\ell} P_m(x_j)
\]
must have $x_1=0$, $x_1=1$, and $P_m(x_j)\leq 1$, from which we conclude that $\ell\geq \frac{2m-4}{3}$. 

We note that any representation of $n<3(m-2)$ must satisfy 
\[
P_m(x_1)+P_m(x_2)\in \begin{cases}
 \{0,m-2,m+1 \} &\text{if }n\equiv 0\pmod{3},\\
 \{1,2m-6,2m-3,2m,3m-8\} &\text{if }n\equiv 1\pmod{3},\\
 \{2,m-3,m,3m-7\} &\text{if }n\equiv 2\pmod{3}.
\end{cases}
\]
From this we can conclude that $P_3(x_1)+P_3(x_2)+3\sum_{j=3}^{\frac{2m-4}{3}-5} P_m(x_j)$ represents every positive integer up to $3(m-2)$ except integers in
\begin{multline*}
K_3:=\{2m-21,2m-18,2m-15,2m-12,2m-9, 3m-22, 3m-21,3m-19,\\
3m-18,3m-16,3m-15,3m-13,3m-12, 3m-10\}.
\end{multline*}
By Lemma \ref{lem:conguniversal}, $n=3(m-2)k_1+k_3$ with $k_1\in \N_0$, $0\le k_3<3(m-2)$ and $k_3 \not\in K_3$ is represented by $P_3(x_1)+P_3(x_2)+3\sum_{j=3}^{\frac{2m-4}{3}} P_m(x_j)$.

On the other hand, for each $k_3\in K_3$ we write 
\[
k_3=3(j_0(m-2)-k)+\alpha
\]
 with $j_0\in\{0,1\}$ and $\alpha\in \{0,2,2m-6\}$ and we see that $k\leq 5$ except for $k_3=3m-22$, in which case $k=6$. Thus for every $3m-22\neq k_3\in K_3$, Lemma \ref{lem:splituniversal} implies that $k_3-\alpha+3j(m-2)$ is represented as $3$ times the sum of at most $6$ generalized $m$-gonal numbers for $j\geq k-j_0$. Using \eqref{eqn:Pmvals} we may check the smaller choices of $j$ directly. For the remaining case $k_3=3m-22$, we write 
\begin{multline*}
k_3+3j(m-2)=3((j+1)(m-2)-6)+2\\
 = 3(j(m-2)-5+m-3)+2 = 3(j(m-2)-5+P_m(-1))+2.
\end{multline*}
Thus, using Lemma \ref{lem:splituniversal} to represent $j(m-2)-5$, for every $j\geq 5$ we may represent $k_3+3j(m-2)$ as long as $\ell\geq 9$. There remain finitely many choices of $j$ for each $k_3\in K_3$ and we check these as in the $r=2$ case. 
Now suppose that $4\leq r\leq 6$. We first obtain lower bounds for $\ell_{m,r,r-1}$ by using Guy's argument \cite{Guy} for the exceptional choices of $n$ in Table \ref{tab:badnbigr}.
\begin{center}
\begin{table}[th]\caption{Exceptional $n$ for $4\leq r\leq 6$\label{tab:badnbigr}}
\begin{tabular}{c|l|c|c}
$r$ & $m\pmod{r}$& $n$&lower bound\\
&&& for $\ell_{m,r,r-1}$\\
\hline
\hline
$4$&$m\not\equiv 3\pmod{4}$&$m-4$&$\ceil{\frac{m-3}{4}}+2$\\
$4$&$m\equiv 3\pmod{4}$&$2m-4$&$\ceil{\frac{m-3}{4}}+3$ \\
$5$&all &$m-4$&$\ceil{\frac{m-3}{5}}+3$\\
$6$&all &$m-4$&$\ceil{\frac{m-3}{6}}+4$
\end{tabular}
\end{table}
\end{center}

We define sets $S_r$ by 
\begin{align*}
S_4&:=\{0,1,2,3\}\cup\{ m+j: -3\leq j\leq 2\}\cup \{2m-6,2m-5,2m-3,2m-2,2m,2m+1\}\\
&\qquad \cup\{ 3m+j: -9\leq j\leq 6\text{ or }-3\leq j\leq 0\}\cup\{4m-11,4m-10\},\\
S_5&:=\{0,1,2,3,4\}\cup\{m+j: -3\leq j\leq 3\}\cup \{ 2m+j: -6\leq j\leq 2\}\\
&\qquad \cup \{ 3m+j: -9\leq j\leq -5\text{ or }-3\leq j\leq 1\}\cup \{4m+j: -12\leq j\leq 0\},
\\
S_6&:=\{0,1,2,3,4,5\}\cup\{m+j: -3\leq j\leq 4\}\cup \{2m+j: -6\leq j\leq 3\}\\
&\qquad \cup\{3m+j:-9\leq j\leq 2\}\cup \{4m+j: -12\leq j\leq 1\}\\
&\qquad \cup\{5m+j: -12\leq j\leq 0\}\cup\{6m+j:-15\leq j\leq -13\}.
\end{align*}
The sets $S_r$ are precisely the integers less than $r(m-2)$ which are represented by $\sum_{j=1}^{r-1}P_m(x_j)$.

For each $n\in \N$, we choose $s\in S_r$ and $k_1,k_2\in\N_0$ with $k_2$ minimal such that  
\[
n=s+r(m-2)k_1+rk_2.
\]
By Lemma \ref{lem:conguniversal}, we obtain a representation of $n$ with $r-1+5+k_2$ variables, taking $x_j=1$ for the last $k_2$ variables. If $k_2\leq \ell_2-5$ with $\ell=\ell_{m,r,r-1}$ as given in the statement of the theorem, then $n$ may be represented. We check in Tables \ref{tab:r=4}, \ref{tab:r=5}, and \ref{tab:r=6} that $k_2\leq \ell_2$. For those $\ell_2-4\leq k_2\leq \ell_2$, we rewrite 
\begin{equation}\label{eqn:shiftn}
n=s+r(m-2)(k_1-k)+rkm+r(k_2-2k)=s+r(m-2)(k_1-k)+ rkP_m(2)+r(k_2-2k)P_m(1).
\end{equation}
Having chosen $m$ large enough so that $\ell_2\geq 14$, we see that for $k\leq 5$ we have 
\[
k_2-2k\geq \ell_2-4-2k\geq 0.
\]
We then choose $k:=\min(5,k_1)$. If $k=k_1$, then \eqref{eqn:shiftn} gives a representation of $n$ with $r-1+ k+(k_2-2k)=r-1+k_2-k\leq \ell$ variables. On the other hand, if $k=5$, then Lemma \ref{lem:conguniversal} may be employed to represent $r(m-2)(k_1-5)$ and we obtain a representation of $n$ in \eqref{eqn:shiftn} with $r-1+5+(k_2-5)\leq \ell$ variables. 

\begin{center}
\begin{table}[th]\caption{Bounds for $k_2\leq \ell_2$ in the $r=4$ case\label{tab:r=4}}
\begin{tabular}{|c|c|c|}
\hline
Interval of $n<4(m-2)$& $s\in S_4$ & $k_2$ bound\\
\hline
\hline
$0\leq n\leq m-4$& $\{0,1,2,3\}$ & $k_2\leq \frac{m-4}{4}$\\
\hline
$m-3\leq n\leq 2m-7$& $\{m+j:-3\leq j\leq 2\}$ & $k_2\leq \frac{2m-7-(m-1)}{4}=\frac{m-6}{4}$\\
\hline
\{2m-6,2m-5\}&\{2m-6,2m-5\} & $k_2=0$\\
\hline
 $2m-4$& $\{m+j:-1\leq j\leq 2\}$&$k_2\leq \frac{2m-4-(m-1)}{4}=\frac{m-3}{4}$\\
\hline
$2m-3\leq n\leq 3m-10$&$\{2m-5,2m-3,$ & $k_2\leq \frac{3m-10-(2m-5)}{4}=\frac{m-5}{4}$\\
& $2m-2,2m,2m+1\}$&\\
\hline
\hline
$3m-9\leq n\leq 3m-4$&$\{3m+j:-9\leq j\leq -6\}$ & $k_2\leq 1$\\
\hline
$3m-3\leq n\leq 4m-9$&$\{3m+j:-3\leq j\leq 0\}$&$k_2\leq \frac{4m-9-(3m-3)}{4}=\frac{m-6}{4}$ \\ 
\hline
\end{tabular}
\end{table}
\end{center}

\begin{center}
\begin{table}[th]\caption{Bounds for $k_2\leq \ell_2$ in the $r=5$ case\label{tab:r=5}}
\begin{tabular}{|c|c|c|}
\hline
Interval of $n<5(m-2)$& $s\in S_5$ & $k_2$ bound\\
\hline
\hline
$0\leq n\leq m-4$& $\{0,1,2,3,4\}$ & $k_2\leq \frac{m-4}{5}$\\
\hline
$m-3\leq n\leq 2m-7$& $\{m+j:-3\leq j\leq 3\}$ & $k_2\leq \frac{2m-7-(m-1)}{5}=\frac{m-6}{5}$\\
\hline
$2m-6\leq n\leq 3m-10$&$\{2m+j:-6\leq j\leq 2\}$&$k_2\leq \frac{3m-10-(2m-2)}{5}=\frac{m-8}{5}$\\
\hline
$3m-9\leq n\leq 4m-11$&$\{3m+j:-9\leq j\leq -6$ & $k_2\leq \frac{4m-11-(3m-3)}{5}=\frac{m-8}{5}$\\
&or $-3\leq j\leq 1$ & \\
\hline
$4m-12\leq n\leq 5m-11$&$\{4m+j:-12\leq j\leq 0\}$&$k_2\leq \frac{5m-11-(4m-4)}{5}=\frac{m-7}{5}$ \\ 
\hline
\end{tabular}
\end{table}
\end{center}

\begin{center}
\begin{table}[th]\caption{Bounds for $k_2\leq \ell_2$ in the $r=6$ case\label{tab:r=6}}
\begin{tabular}{|c|c|c|}
\hline
Interval of $n<6(m-2)$& $s\in S_6$ & $k_2$ bound\\
\hline
\hline
$0\leq n\leq m-4$& $\{0,1,2,3,4,5\}$ & $k_2\leq \frac{m-4}{6}$\\
\hline
$m-3\leq n\leq 2m-7$& $\{m+j:-3\leq j\leq 4\}$ & $k_2\leq \frac{2m-7-m}{6}=\frac{m-7}{6}$\\
\hline
$2m-6\leq n\leq 3m-10$&$\{2m+j:-6\leq j\leq 3\}$ & $k_2\leq \frac{3m-10-(2m-1)}{6}=\frac{m-9}{6}$\\
\hline
$3m-9\leq n\leq 4m-11$&$\{3m+j:-9\leq j\leq 2\}$ & $k_2\leq \frac{4m-11-(3m-2)}{6}=\frac{m-9}{6}$\\
\hline
$4m-12\leq n\leq 5m-11$&$\{4m+j:-12\leq j\leq 1\}$ & $k_2\leq \frac{5m-11-(4m-3)}{6}=\frac{m-8}{6}$\\
\hline
$5m-12\leq n\leq 6m-16$&$\{5m+j:-12\leq j\leq -11$ & $k_2\leq \frac{6m-16-(5m-9)}{6}=\frac{m-2}{6}$\\
&or $-9\leq j\leq -6$ &\\
&or $-3\leq j\leq 0\}$ & \\
\hline
$6m-15\leq n\leq 6m-13$ &$\{6m+j:-15\leq j\leq -13\}$& $k_2=0$\\
\hline
\end{tabular}
\end{table}
\end{center}

\end{proof}
We are now ready to conclude the corollary.
\begin{proof}[Proof of Corollary \ref{cor:gammabound}]
The proof of Theorem \ref{thm:lmr} (3) immediately implies Corollary \ref{cor:gammabound} because either $3m-12$ or $2m-9$ is not represented by $P_{m,\bm{a}_{3,2,\ell}}$ for $\ell<\ell_{m,3,2}$, but one can see that every smaller integer is represented by $P_{m,\bm{a}_{3,2,\ell_{m,3,2}-1}}$. 
\end{proof}
\end{section}

\bigskip

\end{document}